 \newtheorem{thm}{Theorem}[section]
 \newtheorem{cor}[thm]{Corollary}
 \newtheorem{lem}[thm]{Lemma}
 \theoremstyle{definition}
 \newtheorem{defn}[thm]{Definition}
 \theoremstyle{remark}
 \numberwithin{equation}{section}
\begin{document}

%
%
%
%
%
%
%
%
%

\title[On tensor analogues of some automorphisms]
 {On  tensor analogues of  commuting automorphisms and  central automorphisms}


\author[ Mohammadzadeh]{Fahimeh Mohammadzadeh}
\address{Department of Mathematics\\
 Faculty of Sciences\\
 Payame Noor University\\
 19395-4697 Tehran\\
 Iran}
\email{F.mohamadzade@gmail.com}


\author[ Golmakani]{Hanieh Golmakani}
\address{Department Of Mathematics\\
 Faculty Of Science \\
 Mashhad Branch \\
 Islamic Azad University \\
 91735-413 Mashhad\\
 Iran}
\email{h.golmakani@mshdiau.ac.ir}


\author[Hokmabadi]{Azam Hokmabadi}

\address{%
Department of Mathematics\\
 Faculty of Sciences\\
 Payame Noor University\\
 19395-4697 Tehran\\
 Iran}

\email{ahokmabadi@pnu.ac.ir}


\author[Mohammadzadeh]{Elaheh Mohammadzadeh}
\address{Department of Mathematics\\
 Faculty of Sciences\\
 Payame Noor University\\
 19395-4697 Tehran\\
 Iran}
\email{mohamadzade.pnus@gmail.com}

\subjclass{ 20D45; 20F45; 20F99}

\keywords{non-abelian tensor square, commuting automorphism, central automorphism, $2_\otimes$-Engel elements}

\date{January 1, 2004}

\begin{abstract}

In this paper, we introduce the tensor analogues of commuting automorphisms and central automorphisms. Then we give several properties of such automorphisms and apply these new concepts to give some interesting results for $2_\otimes$-Engel elements.

\end{abstract}

\maketitle
\section{Introduction and Motivation}

For a group $G$ the non-abelian tensor square $G \otimes G$ is a
group generated by the symbols $g \otimes h$, subject to the
relations
\begin{center}
$gg^{'}\otimes h =(g^{g^{'}}\otimes h^{g^{'}})(g^{'} \otimes h )$
and  $g \otimes hh^{'} =    (g \otimes h^{'} ) (g^{h^{'}}\otimes
h^{h^{'}})$,
\end{center}
where $g,g^{'},h,h^{'} \in G $ and $g^{h}=h^{-1}gh$.\\
The more general concept of non-abelian tensor product of groups
acting on each other in certain compatible way was introduced by
Brown and etc. in \cite{Bro}, following the ideas of Dennis
\cite{Den}. Since the concept of tensor product holds a number of applications in many other branches of group theory as well as certain other fields of mathematics, it  had been one of the concerns to many recent studies. For further research you may consider \cite{Nir},\cite{Berg}, \cite{Ben}.\\
The following lemmas are very important in non-abelian tensor square and frequently used in this paper.\\

\begin{lem} \cite{Brown} Let $g, g^{'}, h, h^{'} \in G$.
The following relations
hold in $G \otimes G$\\
(i) $(g^{-1}\otimes h ) ^{g}=(g \otimes h )
^{-1}=(g\otimes h ^{-1}) ^{h}.$\\
(ii) $(g^{'}\otimes h ^{'} ) ^{g \otimes
h}=(g^{'}\otimes h ^{'} ) ^{[g, h ]}$.\\
(iii) $g^{'} \otimes [g, h ]=(g\otimes h )
^{-g^{'}}(g\otimes h ).$\\
(iv) $[g, h ] \otimes g^{'}=(g\otimes h )^{-1}(g\otimes h
)^{g^{'}}$.\\
(v) $[g, h ] \otimes [g ^{'}, h ^{'} ]=[ g \otimes g ^{'}, h
\otimes  h ^{'} ]$.\\
Note that $G$ acts on $G \otimes G$ by $(g\otimes h )^{g^{'}}=
g^{g^{'}} \otimes h ^{g^{'}}$.
\end{lem}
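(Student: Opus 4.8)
The plan is to obtain all five relations directly from the two defining relations of $G\otimes G$ and the action formula $(g\otimes h)^{g'}=g^{g'}\otimes h^{g'}$, bootstrapping the later parts from the earlier ones. The first step is to record the degenerate identities $g\otimes 1=1=1\otimes h$: putting $h'=1$ in the relation $g\otimes hh'=(g\otimes h')(g^{h'}\otimes h^{h'})$ gives $g\otimes h=(g\otimes 1)(g\otimes h)$, so $g\otimes 1=1$, and putting $g'=1$ in the other relation gives $1\otimes h=1$. Relation (i) is then immediate: substituting $g'=g^{-1}$ into $gg'\otimes h=(g^{g'}\otimes h^{g'})(g'\otimes h)$ yields $1=1\otimes h=(g\otimes h)^{g^{-1}}(g^{-1}\otimes h)$, and since the $G$-action is by automorphisms (hence commutes with inversion) we may solve for $g^{-1}\otimes h$ and then apply the action of $g$ to get $(g^{-1}\otimes h)^{g}=(g\otimes h)^{-1}$; the identity $(g\otimes h^{-1})^{h}=(g\otimes h)^{-1}$ comes the same way from the second defining relation with $h'=h^{-1}$.

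Next I would prove (iii) and (iv), which are mirror images of one another under the two defining relations. For (iv) I write $[g,h]=g^{-1}g^{h}$ and expand $(g^{-1}g^{h})\otimes g'$ by repeated application of the first defining relation to split the products in the left slot, using part (i) to absorb the inverse $g^{-1}$ and the action formula to collect the resulting conjugations; after the cancellations the product telescopes to $(g\otimes h)^{-1}(g\otimes h)^{g'}$. Relation (iii) is obtained identically, expanding $g'\otimes[g,h]$ with the second defining relation, which is the one that splits products in the right slot. I expect this bookkeeping — keeping track of the nested conjugations that each application of a defining relation produces — to be the most calculation-heavy part of the argument, although each individual step is routine.

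It remains to prove (ii) and (v), and (ii) is the genuine obstacle. Here I would observe that, for a fixed element $g\otimes h$, both the conjugation by $g\otimes h$ and the action of $[g,h]\in G$ are automorphisms of $G\otimes G$, so it suffices to check $(g'\otimes h')^{g\otimes h}=(g'\otimes h')^{[g,h]}$ on the generators $g'\otimes h'$; this last equality is then verified by a direct expansion from the defining relations together with (i), (iii) and (iv). Conceptually this is exactly the Peiffer identity for the crossed module $G\otimes G\to G$, $g\otimes h\mapsto[g,h]$, and it is where the real work lies; an alternative is to realise $G\otimes G$ as a subgroup of Rocco's group $\nu(G)$ and read off (ii)--(v) from ordinary commutator calculus there. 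Finally, given (ii), relation (v) is quick: expand the left-hand side by (iv) as $[g,h]\otimes[g',h']=(g\otimes h)^{-1}(g\otimes h)^{[g',h']}$, then invoke (ii) to rewrite the action of $[g',h']$ as conjugation by $g'\otimes h'$ inside $G\otimes G$, which turns the expression into the commutator asserted in (v).
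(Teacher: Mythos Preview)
The paper does not give its own proof of this lemma: it is quoted with a citation to Brown--Loday and no argument follows. There is therefore nothing to compare your proposal against; your outline is the standard derivation of these identities from the defining relations and is correct in substance. One incidental remark: the computation you sketch for (v) produces $[g\otimes h,\,g'\otimes h']$, which is the well-known identity; the form $[g\otimes g',\,h\otimes h']$ printed in the statement appears to be a typographical slip in the paper.
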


\begin{lem} \cite{Brown} Let $G$ be a group.

(i) There exists a unique isomorphism $\theta : G \otimes G \rightarrow G \otimes G$ such that

\hspace{0.4 cm} $\theta (g\otimes h)=(h\otimes g)^{-1}$.

(ii) For any homomorphism $\alpha : G \rightarrow G$ there is a unique homomorphism

\hspace{0.6 cm}$\alpha \otimes \alpha: G \otimes G \rightarrow G \otimes G$ such that
$ \alpha \otimes \alpha (g \otimes h)= \alpha(g) \otimes \alpha(h)$, for

\hspace{0.5 cm} all $g,h \in G$.
\end{lem}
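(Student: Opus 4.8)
The plan is to read both statements as straightforward applications of the presentation of $G \otimes G$ recorded at the start of the section: since $G \otimes G$ is generated by the symbols $g \otimes h$ subject \emph{only} to the two listed families of relations, any assignment of values to the generators $g \otimes h$ extends (uniquely) to a homomorphism out of $G \otimes G$ as soon as those values satisfy the same two families of relations. So for each of (i) and (ii) I would (a) write down the candidate map on generators, (b) check that the two defining relations are respected, and (c) deduce uniqueness — immediate, since the $g\otimes h$ generate — and, for $\theta$, bijectivity.

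For (ii), set $\varphi(g \otimes h) = \alpha(g) \otimes \alpha(h)$ on generators. Because $\alpha$ is a homomorphism, $\alpha(gg') = \alpha(g)\alpha(g')$ and $\alpha(g^{g'}) = \alpha(g)^{\alpha(g')}$ (and likewise with $h$ in place of $g$). Substituting these identities into the image of the relation $gg' \otimes h = (g^{g'} \otimes h^{g'})(g' \otimes h)$ turns that image into precisely the first defining relation of $G \otimes G$ evaluated at $\alpha(g),\alpha(g'),\alpha(h)$; the second relation is handled identically. Hence $\varphi$ descends to a homomorphism $\alpha \otimes \alpha \colon G \otimes G \to G \otimes G$ with the stated effect on generators, and it is the only such homomorphism.

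For (i), set $\theta(g \otimes h) = (h \otimes g)^{-1}$. Applying $\theta$ to $gg' \otimes h = (g^{g'} \otimes h^{g'})(g' \otimes h)$ gives, on the left, $(h \otimes gg')^{-1}$ and, on the right, $(h^{g'} \otimes g^{g'})^{-1}(h \otimes g')^{-1} = \big((h \otimes g')(h^{g'} \otimes g^{g'})\big)^{-1}$; these agree because $h \otimes gg' = (h \otimes g')(h^{g'} \otimes g^{g'})$, which is just the relation $g \otimes hh' = (g \otimes h')(g^{h'} \otimes h^{h'})$ with $(g,h,h')$ renamed to $(h,g,g')$. Symmetrically, applying $\theta$ to $g \otimes hh' = (g \otimes h')(g^{h'} \otimes h^{h'})$ and inverting reduces to the first defining relation with $(g,g',h)$ renamed to $(h,h',g)$. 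Thus $\theta$ is a well-defined endomorphism of $G \otimes G$ (one could instead verify this using Lemma 1.1, but that is not needed). Finally $\theta$ is bijective because $\theta(\theta(g \otimes h)) = \theta\big((h \otimes g)^{-1}\big) = \theta(h \otimes g)^{-1} = \big((g \otimes h)^{-1}\big)^{-1} = g \otimes h$, so $\theta^2$ fixes a generating set, whence $\theta^2 = \mathrm{id}$ and $\theta$ is its own inverse; uniqueness is again automatic.

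The only delicate point is the bookkeeping in step (b) — keeping the substitutions consistent across the two relations and remembering that inversion reverses the order of a product — but there is no conceptual obstacle: everything rests on the presentation of $G \otimes G$ together with the fact that homomorphisms preserve products and conjugation.
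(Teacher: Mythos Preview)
Your argument is correct: both parts follow by checking that the proposed assignments on generators respect the two defining relations of $G\otimes G$, and your verification of each relation (including the inversion bookkeeping for $\theta$) is accurate, as is the observation that $\theta^2=\mathrm{id}$ on generators.

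There is, however, nothing to compare against: the paper does not prove this lemma. It is stated with a citation to \cite{Brown} (Brown--Loday) and used as a black box throughout. So your write-up is not an alternative to the paper's proof but rather a self-contained justification of a result the authors import from the literature. What you have written is the standard argument one would give, and it is fine as such.
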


There is a natural surjective homomorphism $\kappa : G \otimes G \rightarrow G'$ that is defined by $\kappa (g \otimes h) = [g, h]$. This homomorphism suggests generalizations of many commutator-related concepts to tensor analogues. In 1995, Ellis \cite{Ell} defined the tensor center of $G$,  $Z^\otimes(G)$, consisting of all $a \in G$ with $a\otimes x=1_\otimes$, for every $x \in G$.
In 2003, Biddle and Kappe \cite{Bid} introduced the $n^{\text{\tiny th}}$ tensor
center of $G$,$$ Z_n^\otimes (G)=\{ a \in G | [a,g_1, g_2, \dots ,
g_{n-1}]\otimes g_n=1_\otimes,\ \ \forall \ g_1, \dots , g_n \in G \}.$$
It can be easily seen that $Z_n^\otimes (G)$ is a
characteristic subgroup of $G$ contained in $Z_n(G)$.
Further, for a group $G$ and a non-empty subset $X$, they \cite{Bid} defined the
tensor annihilator of $X$ in $G$ to be the set
$$C_G^\otimes(X)=\{ a \in G | a\otimes x=1_\otimes,\ \ \forall \ x \in X \}.$$
 Moreover the tensor analogue of right 2-Engel elements, the right $2_\otimes$-Engel elements, were introduced by Biddle and Kappe \cite{Bid} as follows:
$$R_2^\otimes(G)= \{g \in G | [g,x]\otimes x=1_\otimes, \forall \ x \in G \}.$$
They proved that $R_2^\otimes(G)$ is a characteristic subgroup of $G$ that contains $Z(G)$ and is contained in
$R_2(G)$. They also gave an example of a group $W$ in which $R_2^\otimes(W)$ is properly contained in the set of right 2-Engel elements and the center is properly contained in $R_2^\otimes(W)$. It is interesting to know that when $Z(G)$ and $R_2(G)$ coincide exactly with $R_2^\otimes(G)$. The present study gives a condition for $G$ which implies $Z(G)=R_2^\otimes(G)$.

In 2005, P. Moravec \cite{Mor} determined some further information about $R_2^\otimes(G)$ and provide some new characterizations of this subgroup. He also described the structure of $2_\otimes$-Engel groups. Recall that a group $G$ is called $2_\otimes$-Engel group, if $R_2^\otimes(G)=G$. Among the results of the present research, we give an interesting result for the derived subgroup of $R_2^\otimes(G)$ \\

In this paper, we first introduce the tensor analogues of commuting automorphisms and central automorphisms. Then we apply these concepts
 to give some interesting results for $R_2^\otimes(G)$. We recall that an automorphism $\alpha$ of a group $G$ is called a
commuting automorphism, if $g \alpha(g)=\alpha(g)g$, for all $g \in G$. The set of all commuting automorphisms of $G$  is denoted by $A(G)$. This kind of automorphisms were first studied for various classes of rings \cite{Bell,Div,Luh}.
M. Desconesco and G. Silberbers \cite{DS} proved that $A(G)$ do not necessarily form a subgroup of $Aut(G)$.
We also recall that a central automorphism of a group $G$ is an automorphism $\alpha$ of $G$ such that for all $x \in G$, $[g, \alpha] \in Z(G)$, in which $[g,\alpha]=g^{-1}\alpha(g)$. The set of all central automorphisms of a group $G$ is usually denoted by $Aut_c(G)$.
 It is clear that $Aut_c(G)$ is a subgroup of $Aut(G)$ which is contained in $A(G)$.\\

In Section 2,  we first introduce the tensor analogues of commuting automorphisms of $G$, called tensor commuting automorphisms and
denote the set of all such automorphisms by $A^\otimes(G)$. Then we give several properties of elements of $A^\otimes(G)$.
Moreover, we give an interesting result which bridges together the right $2_\otimes$-Engel elements and tensor commuting automorphisms and then apply it to find a condition for $G$ which implies that $Z(G)= R_2^\otimes(G)$.
In the third section, we first introduce the tensor central automorphisms of $G$ and
denote the set of all such automorphisms by $Aut_c^\otimes(G)$. Then we give some properties of $Aut_c^\otimes (G)$ and apply them to obtain some interesting results for  $R_2^\otimes(G)$ and $A^\otimes(G)$.

\section{Tensor commuting automorphisms}

\begin{defn}
 We call an automorphism $\alpha$ of a group
$G$ a tensor commuting automorphism, if $g \otimes
\alpha(g)=1_\otimes,$ for all $g \in G$. We denote the set of all tensor
commuting automorphisms of $G$ by $A^\otimes(G)$.\
\end{defn}

The following lemma which is frequently used in this paper, gives an important property of a tensor commuting automorphism.\\

\begin{lem} Let $G$ be a group and $\alpha \in
A^\otimes(G)$. Then for all $x, y \in G$,\\
(i) $x \otimes \alpha(y)=(y \otimes \alpha(x))^{-1}.$\\
(ii) $\alpha(x) \otimes y=(\alpha(y) \otimes x)^{-1}.$
\end{lem}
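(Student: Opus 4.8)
The plan is to prove part (i) directly from the hypothesis $\alpha\in A^\otimes(G)$ by applying the tensor identity to the product $xy$, and then to derive part (ii) by a symmetric argument (or by applying $\theta$ from Lemma 1.2(i)). First I would expand $xy\otimes\alpha(xy)$ using the defining bilinearity relations of the non-abelian tensor square. Writing $\alpha(xy)=\alpha(x)\alpha(y)$, the relation $g\otimes hh'=(g\otimes h')(g^{h'}\otimes h^{h'})$ together with $gg'\otimes h=(g^{g'}\otimes h^{g'})(g'\otimes h)$ lets me break $xy\otimes\alpha(x)\alpha(y)$ into four tensor factors involving $x\otimes\alpha(x)$, $x\otimes\alpha(y)$, $y\otimes\alpha(x)$, and $y\otimes\alpha(y)$, each possibly conjugated by elements of $G$. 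Since $\alpha$ is a tensor commuting automorphism, $x\otimes\alpha(x)=1_\otimes$, $y\otimes\alpha(y)=1_\otimes$, and also $xy\otimes\alpha(xy)=1_\otimes$. The surviving terms then collapse to a relation of the form $(x\otimes\alpha(y))^{g}\,(y\otimes\alpha(x))^{h}=1_\otimes$ for suitable conjugating elements; using Lemma 1.1(ii), which says conjugation by $g\otimes h$ equals conjugation by $[g,h]$, and noting that $\kappa$ sends these tensors into the derived subgroup, I can normalise the conjugations and extract $x\otimes\alpha(y)=(y\otimes\alpha(x))^{-1}$ up to a conjugation that turns out to be trivial because the right-hand side already lies in the relevant term.

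The cleaner route, which I would actually write out, is to avoid the four-term expansion and instead use Lemma 1.1 identities more surgically. Consider $1_\otimes = xy\otimes\alpha(x)\alpha(y)$ and also the two ``diagonal'' vanishing facts; rearranging gives $x\otimes\alpha(y)$ expressed in terms of $(y\otimes\alpha(x))$ conjugated by $\alpha(y)$ or by $y$, and then Lemma 1.1(i), $(g\otimes h^{-1})^{h}=(g\otimes h)^{-1}$ and its companion, is exactly what removes that conjugation. For part (ii), I would apply the isomorphism $\theta$ of Lemma 1.2(i): since $\theta(g\otimes h)=(h\otimes g)^{-1}$, the hypothesis $g\otimes\alpha(g)=1_\otimes$ transforms appropriately, and part (i) applied in the image yields $\alpha(x)\otimes y=(\alpha(y)\otimes x)^{-1}$. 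Alternatively (ii) follows from (i) by inverting both sides and using Lemma 1.1(i) to swap the roles of the tensor slots.

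The main obstacle I anticipate is bookkeeping the conjugating superscripts that appear when expanding $xy\otimes\alpha(x)\alpha(y)$: the bilinearity relations introduce conjugations by $\alpha(y)$, $y$, and combinations thereof, and one must check that after using the vanishing of the diagonal terms and Lemma 1.1(ii) (conjugation by a tensor $=$ conjugation by the corresponding commutator) these conjugations either cancel or act trivially on the terms that remain. The key simplification is that $x\otimes\alpha(x)=1_\otimes$ means many would-be correction terms are already trivial, so in practice only one cross term needs careful handling. Once the conjugations are shown to be harmless, both (i) and (ii) drop out as one-line consequences.
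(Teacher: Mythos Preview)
Your overall strategy matches the paper's: expand a product element, use $g\otimes\alpha(g)=1_\otimes$ to kill the diagonal pieces, and then argue that the leftover conjugations are harmless. The gap is in the last step, and it is tied to your choice of $xy$ rather than $xy^{-1}$.

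If you expand $xy\otimes\alpha(x)\alpha(y)$ as you describe, after killing the diagonal terms you obtain
\[
(x\otimes\alpha(y))^{\,y}\,(y\otimes\alpha(x))^{\,\alpha(y)}=1_\otimes,
\]
so $x\otimes\alpha(y)=\bigl((y\otimes\alpha(x))^{-1}\bigr)^{\alpha(y)y^{-1}}$. The residual exponent here is $\alpha(y)y^{-1}$, which is \emph{not} a commutator of $y$ and $\alpha(y)$; knowing $[y,\alpha(y)]=1$ tells you $y$ and $\alpha(y)$ commute, but it does not force $\alpha(y)y^{-1}$ to centralise $y\otimes\alpha(x)$ (that would require $\alpha(y)y^{-1}$ to centralise $\alpha(x)$, which is false in general). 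Lemma~1.1(i) cannot remove this conjugation either, since it only handles conjugation by one of the tensor factors. Your appeal to Lemma~1.1(ii) is also misplaced: that identity concerns conjugation by an element of $G\otimes G$, whereas the superscripts arising here are elements of $G$.

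The paper avoids this by expanding $xy^{-1}\otimes\alpha(xy^{-1})$ instead. After the same cancellations one gets exponents $\alpha(y)^{-1}y^{-1}$ and $y^{-1}\alpha(y)^{-1}$ on the two cross terms, and these differ precisely by the commutator $[\alpha(y),y]$. Since $y\otimes\alpha(y)=1_\otimes$ forces $[y,\alpha(y)]=1$ via $\kappa$, that commutator is trivial and the identity $x\otimes\alpha(y)=(y\otimes\alpha(x))^{-1}$ drops out. So the fix is simply to run your argument with $xy^{-1}$; part (ii) then follows, as you say, either by the symmetric computation or by applying $\theta$ from Lemma~1.2(i).
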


\begin{proof}
Let $x, y \in G$ and $\alpha \in
A^\otimes(G)$. Then we have
 \begin{eqnarray*}
1_\otimes &=& xy^{-1} \otimes \alpha(xy^{-1})
\\&=& xy^{-1} \otimes \alpha(x)\alpha(y^{-1})\\&=&
(x \otimes \alpha(y)^{-1})^{y^{-1}}(y^{-1} \otimes \alpha(x))^{\alpha(y)^{-1}}.\\
\end{eqnarray*}
The above equalities imply that $(x \otimes \alpha(y)^{-1})^{-y^{-1}}=(y^{-1} \otimes
\alpha(x))^{\alpha(y)^{-1}}$. Then by Lemma 1.1, we have $(x \otimes
\alpha(y))^{\alpha(y)^{-1}y^{-1}}=(y \otimes
\alpha(x))^{-y^{-1}\alpha(y)^{-1}}.$
Thus we have $(y \otimes \alpha(x))^{-1}=(x \otimes
\alpha(y))^{[\alpha(y),y]}$. On the other hand, $\alpha \in A^{\otimes}(G)$ implies that $[\alpha(y),y]=1$.
Therefore (i) holds. The proof of (ii) is similar to (i).

\end{proof}

The next theorem shows that $A^{\otimes}(G)$ does have many properties of a group.\\

\begin{thm} Let $G$ be a group.\\
(i) If $x\otimes x=1_\otimes$, for all $x \in G$, then  $A^\otimes(G)$ is closed under powers.\\
(ii) $A^\otimes(G)$ is closed under conjugates.\\
(iii) $A^\otimes(G)$ is closed under inverses.\\
(iv) If $\alpha, \beta \in A^\otimes(G)$, then $\alpha \beta \in
A^\otimes(G)$, if and only if $\alpha(x) \otimes
\beta(x)=1_\otimes$, for all $x \in G$.\\
(v) If  $\alpha, \beta \in A^\otimes(G)$, then $(\alpha \beta)^n
\alpha, \beta (\alpha  \beta)^n \in A^\otimes(G)$, for all $n \geq 0$.
\end{thm}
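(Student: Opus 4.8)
The plan is to treat the five parts in logical rather than numerical order, since (i) falls out of (iv) and every part rests on Lemma 2.2 together with the isomorphism $\theta$ of Lemma 1.2(i). First I would record the auxiliary observation that $\alpha\in A^\otimes(G)$ forces $\alpha(y)\otimes y=1_\otimes$ for all $y\in G$: apply $\theta$ to $y\otimes\alpha(y)=1_\otimes$ to obtain $(\alpha(y)\otimes y)^{-1}=1_\otimes$. Then (iii) is immediate: given $g\in G$, write $g=\alpha(y)$; then $g\otimes\alpha^{-1}(g)=\alpha(y)\otimes y=1_\otimes$. For (ii), take $\alpha\in A^\otimes(G)$ and $\beta\in\mathrm{Aut}(G)$ and apply the homomorphism $\beta\otimes\beta$ of Lemma 1.2(ii) — an automorphism of $G\otimes G$ with inverse $\beta^{-1}\otimes\beta^{-1}$ — to $x\otimes(\beta^{-1}\alpha\beta)(x)$; it is carried to $\beta(x)\otimes\alpha(\beta(x))$, which is $1_\otimes$ since $\alpha\in A^\otimes(G)$, so injectivity of $\beta\otimes\beta$ gives $x\otimes(\beta^{-1}\alpha\beta)(x)=1_\otimes$.

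For (iv): $\alpha\beta$ is again an automorphism, and applying Lemma 2.2(i) to $\alpha$ (with $y=\beta(x)$) gives $x\otimes(\alpha\beta)(x)=x\otimes\alpha(\beta(x))=\big(\beta(x)\otimes\alpha(x)\big)^{-1}$, while applying $\theta$ shows $\beta(x)\otimes\alpha(x)=1_\otimes$ is equivalent to $\alpha(x)\otimes\beta(x)=1_\otimes$. Hence $\alpha\beta\in A^\otimes(G)$ — i.e. the left side vanishes for every $x$ — exactly when $\alpha(x)\otimes\beta(x)=1_\otimes$ for every $x$. Part (i) then follows by a strong induction on $n$: $\alpha^0=\mathrm{id}\in A^\otimes(G)$ precisely because $x\otimes x=1_\otimes$ (the sole place this hypothesis enters) and $\alpha^1=\alpha\in A^\otimes(G)$; if $\alpha^{n-1},\alpha^{n}\in A^\otimes(G)$ then (iv) reduces $\alpha^{n+1}=\alpha\alpha^{n}\in A^\otimes(G)$ to $\alpha(x)\otimes\alpha^{n}(x)=1_\otimes$, and writing $\alpha^{n}(x)=\alpha^{n-1}(\alpha(x))$ and $y=\alpha(x)$ this reads $y\otimes\alpha^{n-1}(y)=1_\otimes$, true since $\alpha^{n-1}\in A^\otimes(G)$. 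Negative powers come from (iii).

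The substantive part is (v). Since $(\alpha\beta)^n\alpha=\alpha(\beta\alpha)^n$ and $\beta(\alpha\beta)^n=(\beta\alpha)^n\beta$ is the same expression with $\alpha$ and $\beta$ interchanged, it is enough to prove $(\alpha\beta)^n\alpha\in A^\otimes(G)$ for all $\alpha,\beta\in A^\otimes(G)$ and all $n\ge 0$. Applying Lemma 2.2(i) to $\alpha$ gives $x\otimes\big((\alpha\beta)^n\alpha\big)(x)=x\otimes\alpha\big((\beta\alpha)^n(x)\big)=\big((\beta\alpha)^n(x)\otimes\alpha(x)\big)^{-1}$, so the problem reduces to the claim that $(\beta\alpha)^n(x)\otimes\alpha(x)=1_\otimes$ for all such $\alpha,\beta$, $n$, and $x\in G$. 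I would prove this by strong induction on $n$. The base cases are $n=0$ (namely $x\otimes\alpha(x)=1_\otimes$, as $\alpha\in A^\otimes(G)$) and $n=1$ (namely $\beta(\alpha(x))\otimes\alpha(x)=\beta(y)\otimes y=1_\otimes$ with $y=\alpha(x)$, by the auxiliary observation). For $n\ge 2$ put $w=(\beta\alpha)^{n-1}(x)$, so $(\beta\alpha)^n(x)=\beta(\alpha(w))$; Lemma 2.2(ii) applied to $\beta$ rewrites $\beta(\alpha(w))\otimes\alpha(x)$ as $\big(\beta(\alpha(x))\otimes\alpha(w)\big)^{-1}$, and then Lemma 2.2(i) applied to $\alpha$ rewrites $(\beta\alpha)(x)\otimes\alpha(w)$ as $\big(w\otimes\alpha((\beta\alpha)(x))\big)^{-1}$; the two inversions cancel, and since $w=(\beta\alpha)^{n-2}(x')$ with $x'=(\beta\alpha)(x)$, the result equals $(\beta\alpha)^{n-2}(x')\otimes\alpha(x')$, which is $1_\otimes$ by the induction hypothesis. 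I expect this ``the index drops by two'' recursion to be the main obstacle: the naive idea of building $(\alpha\beta)^n\alpha$ one automorphism at a time via (iv) breaks down because the partial products $(\alpha\beta)^k$ need not lie in $A^\otimes(G)$ — which is precisely why (i) needs an extra hypothesis — so one is forced to manipulate the tensor symbol itself, alternately invoking the two parts of Lemma 2.2.
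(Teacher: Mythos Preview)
Your argument is correct in every part. Parts (ii), (iii), and (iv) coincide with the paper's treatment (Lemma~1.2 for the $\theta$-flip and the functoriality of $\beta\otimes\beta$, Lemma~2.2(i) for the key identity), and your derivation of (i) from (iv) is a harmless logical reordering of the paper's direct computation---both reach the same ``index drops by two'' recursion with base cases $n=0,1$.

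The only genuine divergence is in (v). The paper does not unwind the tensor; instead it first proves the length-three case $\beta\alpha\beta\in A^\otimes(G)$ directly (one application of Lemma~2.2(i), exactly your $n=1$ base case) and then observes the nesting
\[
\beta(\alpha\beta)^{n+1}=\beta\,\alpha'\,\beta,\qquad \alpha'=\alpha(\beta\alpha)^n,
\]
so that the induction hypothesis places $\alpha'\in A^\otimes(G)$ and the $n=1$ case, applied to the pair $(\alpha',\beta)$, finishes the step. Thus the paper's induction \emph{replaces the automorphism} and drops the index by one, whereas yours \emph{replaces the group element} (by $x'=(\beta\alpha)(x)$) and drops the index by two via alternating uses of Lemma~2.2(i) and (ii). The paper's route is shorter once the palindrome structure is spotted---after the base case the inductive step is a one-line regrouping---while yours is more computational but makes the mechanism completely explicit and highlights the parallel with (i), which is exactly the point you flag at the end.
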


\begin{proof} (i)  Let  $\alpha \in A^\otimes(G)$ and $n \geq 0$. We use induction on $n$ to show that $\alpha ^ n \in
A^\otimes(G)$. The assertion is clear for $n=0,1$. If $n=2$, then by Lemma 2.2 (i),
$x \otimes \alpha^2(x)= (\alpha(x) \otimes \alpha(x))^{-1}=1_\otimes$ and so $\alpha^2 \in A^\otimes(G)$.
Suppose that $n > 2$ and $\alpha ^ k \in A^\otimes(G)$, for all $k<n$. Then $\alpha^{n-2} \in A^\otimes(G)$ and
we have $y \otimes \alpha^{n-2}(y)=1_{\otimes}$, for all $y \in G$. Hence by Lemma 2.2 (i), we have
\begin{eqnarray*}
x \otimes \alpha ^n(x) &=& (\alpha^{n-1} (x)\otimes \alpha (x))^{-1}\\
& =& (\alpha^{n-2}( \alpha(x) )\otimes \alpha (x))^{-1}=1_{\otimes}.
\end{eqnarray*}

Therefore $\alpha ^ n \in A^\otimes(G)$ and the assertion follows.\\

(ii) Let  $\alpha \in A^\otimes(G) , \gamma \in Aut(G)$ and $x \in G$. Then
$\gamma(x) \otimes \alpha(\gamma(x))=1_\otimes$.
 Hence in view of Lemma 1.2 (ii), we have $x\otimes \gamma^{-1} \alpha \gamma(x) =\gamma
^{-1} \otimes \gamma^{-1} ( \gamma(x) \otimes \alpha
\gamma(x))=1_\otimes$. Therefore $\gamma^{-1} \alpha \gamma \in A^\otimes(G) $.\\

(iii) The result follows by Lemma 1.2.\\

(iv) Since by Lemma 2.2 (i) $\beta(x) \otimes \alpha(x)=(x \otimes \alpha \beta(x))^{-1}$, for all $x \in
G$, we have $\alpha \beta \in A^\otimes(G)$ if and only if  $\beta(x) \otimes \alpha(x)=1_\otimes $. Hence the result holds by  Lemma 1.2 (i).

(v) We use induction on $n$.
Since $\alpha , \beta \in A^\otimes(G)$, we have $\beta(x)\otimes \alpha(\beta(x))=1_\otimes$.
Then applying Lemma 2.2 (i) and Lemma 1.2 (i), we have
$$x \otimes \beta \alpha \beta(x)=(\alpha(\beta(x)) \otimes \beta(x))^{-1}=1_\otimes,$$ for all $x \in G$.
 Hence  $\beta \alpha \beta \in A^\otimes(G)$ and the result holds for $n=1$. Now assume that the assertion holds for $n>1$. Then $\beta(\alpha
\beta)^n, \alpha(\beta \alpha)^n \in A^\otimes(G),$ for all $\alpha ,\beta  \in A^\otimes(G)$. Hence we have
 $\beta(\alpha\beta)^{n+1}=\beta(\alpha(\beta \alpha)^n)\beta$ that belongs to $A^\otimes(G)$,
  by the first step of induction. Similarly, we have  $\alpha(\beta \alpha)^{n+1}\in A^\otimes(G)$ and hence the assertion follows.
\end{proof}

\begin{lem} Let $G$ be a group and $x \in G$. If
$\alpha \in A^\otimes(G)$, then $[x,\alpha] \in C_G(G \otimes G)$.
\end{lem}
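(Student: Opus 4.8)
The plan is to show that $[x,\alpha]=x^{-1}\alpha(x)$ annihilates every generator $g\otimes h$ of $G\otimes G$, i.e. that $\bigl(g\otimes h\bigr)^{[x,\alpha]}=g\otimes h$ for all $g,h\in G$. Since $G$ acts on $G\otimes G$ via $(g\otimes h)^{t}=g^{t}\otimes h^{t}$, it suffices to prove $x^{-1}\alpha(x)$ acts trivially, and because the action factors through conjugation in $G$ it is natural first to reduce, via Lemma 1.1(ii), to statements about tensor elements of the form $a\otimes b$ with one slot in the image of $\alpha$. Concretely, I would write $(g\otimes h)^{x^{-1}\alpha(x)} = \bigl((g\otimes h)^{x^{-1}}\bigr)^{\alpha(x)}$ and try to massage this using the defining relations of $G\otimes G$ together with Lemma 2.2.

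First I would record the key consequence of $\alpha\in A^{\otimes}(G)$ that we actually need: by Lemma 2.2(i) and (ii) we have $x\otimes\alpha(y)=(y\otimes\alpha(x))^{-1}$ and $\alpha(x)\otimes y=(\alpha(y)\otimes x)^{-1}$ for all $x,y$. Next, using Lemma 1.1(iii)–(iv) one can express the ``error term'' $(g\otimes h)^{-t}(g\otimes h)$ for $t\in G$ as a tensor of a commutator: $g\otimes[h',h'']$ or $[h',h'']\otimes g$ type expressions. The strategy is to choose $t=x^{-1}\alpha(x)=[x,\alpha]$ and observe that $[x,\alpha]$ is itself a product expressible through $\alpha$, so that the correction terms become tensors involving $\alpha(x)$ and can be collapsed to $1_{\otimes}$ by Lemma 2.2. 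An alternative, possibly cleaner, route: since $[x,\alpha]=x^{-1}\alpha(x)$, use Lemma 1.1(iii) with $g'=g\otimes h$-type manipulation — more precisely, show directly that $g\otimes[x,\alpha]=1_{\otimes}$ and $[x,\alpha]\otimes g=1_{\otimes}$ for all $g$, which by Lemma 1.1(iii)–(iv) is equivalent to $(x\otimes\alpha(x))^{g}=x\otimes\alpha(x)$... wait, that is automatically $1_\otimes$; the point is rather that $[x,\alpha]$ lands in $C_G^{\otimes}(G)$, and then one upgrades from $C_G^{\otimes}(G)$ to $C_G(G\otimes G)$.

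So the cleanest plan is two steps. Step one: show $[x,\alpha]\in C_G^{\otimes}(G)$, i.e. $[x,\alpha]\otimes g=1_{\otimes}=g\otimes[x,\alpha]$ for all $g\in G$. For $[x,\alpha]\otimes g$: write $[x,\alpha]=x^{-1}\alpha(x)$ and expand $x^{-1}\alpha(x)\otimes g$ using the bilinearity-type relation $ab\otimes h=(a^{b}\otimes h^{b})(b\otimes h)$ with $a=x^{-1}$, $b=\alpha(x)$; this gives $(x^{-1}\otimes g)^{\alpha(x)}\cdot$-adjusted terms that, after applying Lemma 1.1(i) and Lemma 2.2(ii) (which controls $\alpha(x)\otimes g$ versus $\alpha(g)\otimes x$), telescope to $1_{\otimes}$. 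The symmetric computation with $\theta$ from Lemma 1.2(i) handles $g\otimes[x,\alpha]$. Step two: for any $a\in C_G^{\otimes}(G)$ and any generator $g\otimes h$, Lemma 1.1(iii) gives $g\otimes h = g\otimes h\cdot(g\otimes h)^{-a}(g\otimes h)^{a}$ wait—rather, $(g\otimes h)^{-a}(g\otimes h)=(g\otimes[?])$; the point is that $(g\otimes h)^{a}(g\otimes h)^{-1}$ is a value of $\kappa$-related commutator tensor that vanishes because $a$ tensor-annihilates $G$. I expect the main obstacle to be exactly this Step two bookkeeping: turning ``$a\otimes g=1_\otimes$ for all $g$'' into ``$a$ centralizes $G\otimes G$'' requires carefully applying Lemma 1.1(ii)–(iv) to rewrite the conjugation action of $a$ as multiplication by tensors of commutators $[\,\cdot\,,a]$ or tensors $a\otimes g$, and keeping track of which slot and which side the correction appears on. Once that reduction is set up, each individual cancellation is immediate from $\alpha\in A^{\otimes}(G)$ and Lemma 2.2.
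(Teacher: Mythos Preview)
Your two–step plan has a genuine gap: Step~1 is simply false in general. Showing $[x,\alpha]\in C_G^{\otimes}(G)=Z^{\otimes}(G)$ for every $\alpha\in A^{\otimes}(G)$ would say precisely that every tensor commuting automorphism is tensor central, i.e.\ $A^{\otimes}(G)\subseteq Aut_c^{\otimes}(G)$; the paper only proves the reverse inclusion (Lemma~3.2), and the two notions are not equal. Concretely, in the inner case $T_g$ with $g\in R_2^{\otimes}(G)$ (Theorem~2.5) you would be claiming $[x,g]\in Z^{\otimes}(G)$ for all $x$, i.e.\ $g\in Z_2^{\otimes}(G)$; but $R_2^{\otimes}(G)\subseteq Z_2^{\otimes}(G)$ does not hold in general (only the weaker $(R_2^{\otimes}(G))'\subseteq Z_2^{\otimes}(G)$ of Theorem~3.7 is available). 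The moment in your sketch where you write ``which by Lemma~1.1(iii)--(iv) is equivalent to $(x\otimes\alpha(x))^{g}=x\otimes\alpha(x)$'' reveals the slip: Lemma~1.1(iii)--(iv) relate $g'\otimes[g,h]$ to $(g\otimes h)^{g'}$, so they would control $g'\otimes[x,\alpha(x)]$, not $g'\otimes[x,\alpha]$. These are different elements: $[x,\alpha(x)]$ is a group commutator (and indeed equals $1$ since $\kappa(x\otimes\alpha(x))=1$), whereas $[x,\alpha]=x^{-1}\alpha(x)$ is not of that form, so the ``telescope to $1_\otimes$'' never happens.

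The paper's argument bypasses $Z^{\otimes}(G)$ entirely and proves $(z\otimes\alpha(y))^{[x,\alpha]}=z\otimes\alpha(y)$ directly. It applies Lemma~2.2(i) not to a single element but to the product $yx^{-1}$, getting $(yx^{-1}\otimes\alpha(z))^{-1}=z\otimes\alpha(yx^{-1})$; expanding each side with the defining relations and cancelling the pieces that Lemma~2.2 already matches leaves exactly $(z\otimes\alpha(y))^{\alpha(x^{-1})}=(z\otimes\alpha(y))^{x^{-1}}$, which is the desired centralizing statement once you note that $\alpha$ is surjective so $z\otimes\alpha(y)$ ranges over all generators. Your first paragraph (comparing $(g\otimes h)^{x^{-1}}$ and its conjugate by $\alpha(x)$) was actually heading in this direction before you switched to the flawed two-step plan; the missing idea is to feed a \emph{product} into Lemma~2.2 so that the expansion produces the conjugation you want.
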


\begin{proof} Let $x,y,z$ be arbitrary elements of  $G$ and $\alpha \in A^\otimes(G)$. By Lemma 2.2, we have
$(yx^{-1} \otimes \alpha(z))^{-1}=z \otimes \alpha(yx^{-1}).$
Then we have $$(x^{-1} \otimes \alpha(z))^{-1}(y \otimes \alpha(z))^{-x^{-1}}=(z \otimes \alpha(x^{-1}))(z \otimes \alpha(y))^{\alpha (x^{-1})}.$$
Now applying Lemma 2.2, we conclude that $(z \otimes \alpha(y))^{\alpha (x^{-1})}=(y \otimes \alpha(z))^{-x^{-1}}=(z \otimes \alpha(y))^{x^{-1}}.$
Then we have $(z \otimes \alpha(y))^{[x, \alpha]}=z \otimes \alpha(y)$, and therefore $[x,\alpha] \in C_G(G \otimes G)$.
\end{proof}

The next interesting result bridges together the right $2_\otimes$-Engle elements and tensor
commuting automorphisms of a group.\\

\begin{thm}
Let $G$ be a group and $x\otimes x=1_\otimes$, for all $x \in G$. Assume that $T_g$ is the inner
automorphism of $G$ induced by element $g $ of  $G$. Then $T_g \in A^\otimes (G)$ if and only if $g \in R_2^\otimes (G)$.
\end{thm}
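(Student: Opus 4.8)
The plan is to unwind the definition of $T_g \in A^\otimes(G)$ directly. By definition, $T_g \in A^\otimes(G)$ means $x \otimes T_g(x) = 1_\otimes$ for all $x \in G$, i.e. $x \otimes x^g = 1_\otimes$ for all $x$, where $x^g = g^{-1}xg = x[x,g]$. So the whole statement reduces to the equivalence
\[
\bigl(\,x \otimes x[x,g] = 1_\otimes \text{ for all } x \in G\,\bigr) \iff \bigl(\,[g,x]\otimes x = 1_\otimes \text{ for all } x \in G\,\bigr).
\]
First I would expand $x \otimes x[x,g]$ using the defining relation $x \otimes hh' = (x \otimes h')(x^{h'} \otimes h^{h'})$ with $h = x$, $h' = [x,g]$; this gives $x \otimes x[x,g] = (x \otimes [x,g])(x^{[x,g]} \otimes x^{[x,g]})$. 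Under the hypothesis $y \otimes y = 1_\otimes$ for all $y \in G$, the second factor is $1_\otimes$, so $x \otimes x^g = x \otimes [x,g]$. Hence $T_g \in A^\otimes(G)$ if and only if $x \otimes [x,g] = 1_\otimes$ for all $x$.

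Next I would relate $x \otimes [x,g]$ to $[g,x] \otimes x$. By Lemma 1.1(iii), $x \otimes [x,g] = (x \otimes g)^{-x}(x \otimes g)$ — wait, more carefully: Lemma 1.1(iii) reads $g' \otimes [g,h] = (g \otimes h)^{-g'}(g \otimes h)$, so with $g' = x$, and the pair $(g,h)$ replaced by $(x,g)$ we get $x \otimes [x,g] = (x \otimes g)^{-x}(x \otimes g)$. On the other side, Lemma 1.1(iv), $[g,h]\otimes g' = (g \otimes h)^{-1}(g\otimes h)^{g'}$, with $g' = x$ and $(g,h)$ replaced by $(g,x)$, gives $[g,x]\otimes x = (g \otimes x)^{-1}(g \otimes x)^x$. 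So I want to show $(x \otimes g)^{-x}(x \otimes g) = 1_\otimes$ for all $x$ is equivalent to $(g \otimes x)^{-1}(g \otimes x)^x = 1_\otimes$ for all $x$. Using Lemma 1.1(i), $(x \otimes g)^{-1} = (x^{-1}\otimes g)^x$ and also $g \otimes x = (x \otimes g^{-1})^{-g}\cdot$ (type relations), one can convert between the two expressions; alternatively, applying the automorphism $\theta$ of Lemma 1.2(i), which sends $a \otimes b \mapsto (b\otimes a)^{-1}$ and is compatible with the $G$-action via $\kappa$, transforms $x \otimes [x,g] = 1_\otimes$ into $[x,g]\otimes x = 1_\otimes$, and then a further manipulation (or directly invoking that $R_2^\otimes(G)$ can equally be defined with $[x,g]$ in place of $[g,x]$, since $[g,x] = [x,g]^{-1}$ and $a\otimes x = 1_\otimes \iff a^{-1}\otimes x = 1_\otimes$ by Lemma 1.1(i)) finishes the equivalence with $[g,x]\otimes x = 1_\otimes$, i.e. $g \in R_2^\otimes(G)$.

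So the skeleton is: (1) reduce $T_g \in A^\otimes(G)$ to $x \otimes [x,g] = 1_\otimes$ for all $x$, using the defining tensor relation together with the standing hypothesis $y\otimes y = 1_\otimes$; (2) show $x \otimes [x,g] = 1_\otimes$ for all $x$ is equivalent to $[g,x]\otimes x = 1_\otimes$ for all $x$, via Lemma 1.1(iii)–(iv) (or via $\theta$) plus the sign-flip identities in Lemma 1.1(i). I expect step (2) to be the main obstacle: keeping track of the conjugation exponents when moving a factor from the left entry to the right entry of the tensor symbol is exactly where the bookkeeping is delicate, and one must be careful that the equivalence is genuinely an ``if and only if'' — both directions need the hypothesis $y \otimes y = 1_\otimes$ only in step (1), while step (2) should be a formal consequence of Lemma 1.1. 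A clean way to avoid part of the mess is to note that $x \otimes x[x,g] = x\otimes x^g$ and separately $x^g \otimes x^g = 1_\otimes$, then expand $x^g\otimes x^g = x^g \otimes x[x,g]$ the other way using the first defining relation, comparing the two expansions; but whichever route, the crux is the conversion identity between $x \otimes [x,g]$ and $[g,x]\otimes x$.
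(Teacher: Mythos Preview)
Your proposal is correct and follows essentially the same route as the paper: the paper writes $T_g(x)\otimes x = x^g\otimes x$ with $x^g=[g,x^{-1}]x$ and expands on the left, while you write $x\otimes T_g(x)=x\otimes x^g$ with $x^g=x[x,g]$ and expand on the right---a mirror-image of the same computation, both killing the diagonal factor via $y\otimes y=1_\otimes$ and then converting the residual commutator term by Lemma~1.1(i). Your step (2), which you flag as the obstacle, is in fact immediate by the $\theta$-trick you mention: $\theta(x\otimes[x,g])=([x,g]\otimes x)^{-1}$ and $[x,g]=[g,x]^{-1}$ together with Lemma~1.1(i) give the equivalence at once, so there is no real difficulty there.
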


\textbf{\bf Proof}. By Definition 2.1,  $T_g \in A^\otimes (G)$ if and only if $x^g \otimes x= T_g(x)\otimes x=1_\otimes$, for all
$x \in G$. Also, by Lemma 1.1 (i), for all $x \in G$, we have
\begin{eqnarray*}
x^g\otimes x=1_\otimes &\Longleftrightarrow& [g,x^{-1}]x\otimes x=1_\otimes
\\&\Longleftrightarrow&([g,x^{-1}]\otimes x)^x=1_\otimes
\\&\Longleftrightarrow& [g,x]^{-x^{-1}}\otimes x=1_\otimes
\\&\Longleftrightarrow& [g,x]\otimes x =1_\otimes.\\
\end{eqnarray*}
Finally, the last identity holds for all $x \in G$ if and only if $g \in R_2^\otimes (G)$.\\

An interesting consequence of Theorem 2.5, is the following corollary. \\

\begin{cor} Let $G$ be a group and $x\otimes x=1_\otimes$, for all $x \in G$. Then $A^\otimes (G) \cap Inn(G)$
is a subgroup of $Aut(G)$ and $$A^\otimes (G) \cap Inn(G) \cong
\frac{R_2^\otimes (G)}{Z(G)}.$$
\end{cor}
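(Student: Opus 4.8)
The plan is to combine Theorem 2.5 with the standard correspondence between inner automorphisms and cosets of the center. First I would recall that the map $g \mapsto T_g$ is a surjective homomorphism $G \to \operatorname{Inn}(G)$ with kernel $Z(G)$, so that $\operatorname{Inn}(G) \cong G/Z(G)$ and, more precisely, $T_g = T_h$ if and only if $gh^{-1} \in Z(G)$. Under the hypothesis $x \otimes x = 1_\otimes$ for all $x \in G$, Theorem 2.5 tells us that $T_g \in A^\otimes(G)$ precisely when $g \in R_2^\otimes(G)$. Hence the preimage of $A^\otimes(G) \cap \operatorname{Inn}(G)$ under the map $g \mapsto T_g$ is exactly $R_2^\otimes(G)$, which we already know (from the discussion in the introduction, following Biddle and Kappe) is a subgroup of $G$ containing $Z(G)$.

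Next I would argue that $A^\otimes(G) \cap \operatorname{Inn}(G)$ is a subgroup of $\operatorname{Aut}(G)$: since $R_2^\otimes(G)$ is a subgroup of $G$, its image $\{T_g : g \in R_2^\otimes(G)\}$ under the homomorphism $g \mapsto T_g$ is a subgroup of $\operatorname{Inn}(G)$, and by Theorem 2.5 this image is precisely $A^\otimes(G) \cap \operatorname{Inn}(G)$. (Alternatively one could verify closure directly using Theorem 2.1, but routing through Theorem 2.5 is cleaner and uses the subgroup property of $R_2^\otimes(G)$ already recorded.) Then the isomorphism follows by restricting the map $G \to \operatorname{Inn}(G)$, $g \mapsto T_g$, to $R_2^\otimes(G)$: its image is $A^\otimes(G) \cap \operatorname{Inn}(G)$ and its kernel is $R_2^\otimes(G) \cap Z(G) = Z(G)$, since $Z(G) \subseteq R_2^\otimes(G)$. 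The first isomorphism theorem then yields
\[
A^\otimes(G) \cap \operatorname{Inn}(G) \;\cong\; \frac{R_2^\otimes(G)}{Z(G)}.
\]

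I do not anticipate a serious obstacle here; the statement is essentially a formal consequence of Theorem 2.5 once one observes that the hypothesis $x \otimes x = 1_\otimes$ is exactly what is needed to invoke it. The one point requiring a little care is making sure the hypothesis $x\otimes x = 1_\otimes$ is carried through so that Theorem 2.5 applies to every relevant $g$, and recalling the two facts about $R_2^\otimes(G)$ used implicitly — that it is a subgroup and that it contains $Z(G)$ — both of which are stated in the introduction and attributed to Biddle and Kappe. The only mild subtlety is that "$A^\otimes(G) \cap \operatorname{Inn}(G)$ is a subgroup" is not a priori obvious because $A^\otimes(G)$ itself need not be a subgroup of $\operatorname{Aut}(G)$ (mirroring the classical fact about $A(G)$); the content of the corollary is precisely that the intersection with $\operatorname{Inn}(G)$ is well-behaved, and this is exactly what the bijection with the subgroup $R_2^\otimes(G)/Z(G)$ delivers.
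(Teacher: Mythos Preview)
Your proposal is correct and follows essentially the same route as the paper: define the homomorphism $g\mapsto T_g$ from $R_2^\otimes(G)$ into $\operatorname{Inn}(G)$, use Theorem~2.5 to identify its image as $A^\otimes(G)\cap\operatorname{Inn}(G)$, note its kernel is $Z(G)$, and apply the first isomorphism theorem. Your write-up is in fact slightly more explicit than the paper's, spelling out why $Z(G)\subseteq R_2^\otimes(G)$ is needed and why the subgroup assertion is not automatic.
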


\begin{proof}

Consider the map $\theta: R_2^\otimes
(G)\rightarrow   Inn(G)$, defined by
$\theta(g)=T_g$, where $T_g$ is the inner automorphism of $G$
induced by $g$. Then it is easy to see that $\theta$ is an
homomorphism with $\ker \theta =Z(G)$. Also, by Theorem 2.5 $\theta (R_2^\otimes (G))=A^\otimes (G) \cap Inn(G)$. Therefore $A^\otimes (G) \cap Inn(G)$ is a subgroup of $Aut(G)$ and $A^\otimes (G) \cap Inn(G) \cong \frac{R_2^\otimes (G)}{Z(G)}.$
\end{proof}

The following important result is an immediate consequence of Corollary 2.6.\\

\begin{cor} Let $G$ be a group and $x\otimes
x=1_\otimes$, for all $x \in G$. If $A^\otimes (G)$ is the identity subgroup of $Aut(G)$,
then $ R_2^\otimes (G)=Z(G)$.
\end{cor}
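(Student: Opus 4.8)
The plan is to derive this directly from Corollary 2.6. The corollary tells us that, under the hypothesis $x \otimes x = 1_\otimes$ for all $x \in G$, there is an isomorphism $A^\otimes(G) \cap \mathrm{Inn}(G) \cong R_2^\otimes(G)/Z(G)$. So the first step is simply to observe that if $A^\otimes(G)$ is the identity subgroup of $\mathrm{Aut}(G)$, then in particular $A^\otimes(G) \cap \mathrm{Inn}(G)$ is also the identity subgroup, hence the trivial group.

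Second, I would feed this into the isomorphism of Corollary 2.6: the trivial group is isomorphic to $R_2^\otimes(G)/Z(G)$, so $R_2^\otimes(G)/Z(G)$ is trivial, which means $R_2^\otimes(G) = Z(G)$. Here one should recall (as noted in the Introduction, from \cite{Bid}) that $Z(G) \subseteq R_2^\otimes(G)$ always holds, so the quotient $R_2^\otimes(G)/Z(G)$ makes sense and its triviality is exactly the desired equality.

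There is essentially no obstacle here — the result is a one-line corollary of Corollary 2.6, and the only thing to be careful about is making sure the hypothesis of Corollary 2.6 (namely $x \otimes x = 1_\otimes$ for all $x$) is exactly the hypothesis we are given, which it is. So the proof will read:

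\begin{proof}
Since $A^\otimes(G)$ is the identity subgroup of $\mathrm{Aut}(G)$, so is its subgroup $A^\otimes(G) \cap \mathrm{Inn}(G)$. Hence, by Corollary 2.6, $R_2^\otimes(G)/Z(G)$ is trivial, and therefore $R_2^\otimes(G) = Z(G)$.
\end{proof}
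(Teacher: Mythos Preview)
Your proposal is correct and matches the paper's approach exactly: the paper states this as an immediate consequence of Corollary 2.6 and gives no further proof, which is precisely the one-line deduction you have written out.
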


\section{Tensor central automorphisms}

In this section, we define the new concept of tensor central automorphism. The set of all tensor central automorphisms which is denoted by $Aut_c^\otimes(G)$, forms a group. Then we give some properties of this subgroup of $Aut(G)$ and apply them to obtain some interesting results for $R^\otimes(G)$ and $A^\otimes(G)$.\\

\begin{defn} We call an automorphism $\alpha$ of
a group $G$, a tensor central automorphism, if $[x, \alpha] \in
Z^\otimes (G)$, for all $x \in G$.
It is straightforward to see that the set of all
tensor central automorphisms of $G$, denoted by $Aut_c^\otimes(G)$, is a subgroup of $Aut(G)$.
\end{defn}

\begin{lem}
Let $G$ be a group and $x\otimes x=1_\otimes$, for all $x \in G$. Then $Aut_c^\otimes(G) \subseteq A^\otimes (G)$.
\end{lem}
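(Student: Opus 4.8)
The plan is to take an arbitrary $\alpha \in Aut_c^\otimes(G)$ and an arbitrary $x \in G$, and show directly that $x \otimes \alpha(x) = 1_\otimes$. By definition of $Aut_c^\otimes(G)$, the element $[x,\alpha] = x^{-1}\alpha(x)$ lies in $Z^\otimes(G)$, which by Ellis's definition means $[x,\alpha] \otimes g = 1_\otimes$ for every $g \in G$; and since $\theta$ (Lemma 1.2(i)) is an isomorphism sending $g \otimes h$ to $(h \otimes g)^{-1}$, we also get $g \otimes [x,\alpha] = 1_\otimes$ for all $g \in G$. So the idea is to expand $x \otimes \alpha(x)$ by writing $\alpha(x) = x[x,\alpha]$ and using the bilinearity-type relations from Lemma 1.1 together with the hypothesis $x \otimes x = 1_\otimes$.

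Concretely, I would compute
\begin{eqnarray*}
x \otimes \alpha(x) &=& x \otimes x[x,\alpha]\\
&=& (x \otimes [x,\alpha])\,(x^{[x,\alpha]} \otimes x^{[x,\alpha]}).
\end{eqnarray*}
The first factor is trivial because $[x,\alpha] \in Z^\otimes(G)$ (via the $\theta$-symmetrized form noted above). For the second factor, note that $x^{[x,\alpha]} \otimes x^{[x,\alpha]} = (x \otimes x)^{[x,\alpha]}$ by the action formula recorded at the end of Lemma 1.1, and $x \otimes x = 1_\otimes$ by hypothesis, so this is $1_\otimes$ as well. Hence $x \otimes \alpha(x) = 1_\otimes$, and since $x$ was arbitrary, $\alpha \in A^\otimes(G)$.

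I expect the only genuine subtlety to be bookkeeping with the defining relation $g \otimes hh' = (g \otimes h')(g^{h'} \otimes h^{h'})$ — one must apply it in the correct order (expanding the second slot) so that the conjugation lands on $x \otimes x$ and not on something harder to control — and making sure both ``sides'' of tensoring with a $Z^\otimes(G)$-element are killed, which is exactly what $\theta$ provides. There is no deep obstacle here; the hypothesis $x \otimes x = 1_\otimes$ is doing the real work, and everything else is a short manipulation with Lemmas 1.1 and 1.2.
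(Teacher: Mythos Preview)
Your argument is correct and is essentially the paper's proof, just mirrored. The paper starts from $x^{-1}\alpha(x)\otimes x = 1_\otimes$ (the direct $Z^\otimes$ statement), expands the \emph{first} slot to isolate $\alpha(x)\otimes x$, kills the other term via $x\otimes x = 1_\otimes$, and applies $\theta$ (Lemma~1.2(i)) at the end to pass to $x\otimes\alpha(x)$. You apply $\theta$ at the outset to get $x\otimes[x,\alpha]=1_\otimes$, then expand the \emph{second} slot of $x\otimes x[x,\alpha]$ and kill the remaining conjugated $x\otimes x$. Same ingredients, same length, just the order of the $\theta$-step and the choice of which tensor relation to unfold are swapped.
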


\begin{proof} Let $\alpha \in Aut_c^\otimes(G)$ and $x \in G$. Then we have $x^{-1}\alpha(x) \in Z^\otimes(G)$ and so
 $x^{-1}\alpha(x)\otimes x=1_\otimes$. This implies that $\alpha(x)\otimes x=1_\otimes$. Hence $\alpha \in A^\otimes(G)$, by Lemma 1.2 (i).
 \end{proof}

The next lemma provides some criterions for an automorphism to be tensor central.\\

\begin{lem}
Let $G$ be a group and $\alpha \in  Aut(G)$. Then \\
(i) $\alpha \in Aut_c^\otimes(G)$ if and only if $\alpha(y)\otimes x=y \otimes x$, for all $x,y \in G$.\\
(ii) $\alpha \in Aut_c^\otimes(G)$ if and only if $(x \otimes \alpha(y))= (x\otimes y)^{[y,\alpha]}$, for all $x,y \in G$.\\
(iii) If $\alpha \in Aut_c^\otimes(G)$, for all $x,y \in G$, then $\alpha  \otimes \alpha=id_{G \otimes G}.$\\
(iv) Let $ \alpha \in A^\otimes(G)$. Then $\alpha^2 \in Aut_c^\otimes(G)$ if and only if  for all $x,y \in G$, $(\alpha \otimes \alpha)(x \otimes y)=(y \otimes x)^{-1}$.
\end{lem}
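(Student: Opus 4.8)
The plan is to reduce everything to the characterization of tensor central automorphisms given in Lemma 3.3 (i), which says that $\alpha^2 \in Aut_c^\otimes(G)$ if and only if $\alpha^2(y) \otimes x = y \otimes x$ for all $x,y \in G$. So the whole statement will follow once we identify $(\alpha \otimes \alpha)(x \otimes y)$ with an expression involving $\alpha^2(y) \otimes x$.

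First I would use Lemma 1.2 (ii) to write $(\alpha \otimes \alpha)(x \otimes y) = \alpha(x) \otimes \alpha(y)$. Then, since $\alpha \in A^\otimes(G)$, I would apply Lemma 2.2 (ii) with $y$ replaced by $\alpha(y)$ (legitimate because $\alpha$ is a bijection of $G$), obtaining
\begin{eqnarray*}
(\alpha \otimes \alpha)(x \otimes y) \;=\; \alpha(x) \otimes \alpha(y) \;=\; \bigl(\alpha(\alpha(y)) \otimes x\bigr)^{-1} \;=\; \bigl(\alpha^2(y) \otimes x\bigr)^{-1},
\end{eqnarray*}
valid for all $x,y \in G$. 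This single identity is the crux of the argument.

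With that in hand, both implications are immediate. If $\alpha^2 \in Aut_c^\otimes(G)$, then by Lemma 3.3 (i) we have $\alpha^2(y) \otimes x = y \otimes x$, and substituting into the displayed identity gives $(\alpha \otimes \alpha)(x \otimes y) = (y \otimes x)^{-1}$. Conversely, if $(\alpha \otimes \alpha)(x \otimes y) = (y \otimes x)^{-1}$ for all $x,y$, then the displayed identity yields $(\alpha^2(y) \otimes x)^{-1} = (y \otimes x)^{-1}$, hence $\alpha^2(y) \otimes x = y \otimes x$ for all $x,y \in G$, and Lemma 3.3 (i) gives $\alpha^2 \in Aut_c^\otimes(G)$.

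I do not anticipate a genuine obstacle here; the only point requiring a little care is the substitution $y \mapsto \alpha(y)$ in Lemma 2.2 (ii), which is harmless since $\alpha$ is an automorphism, so the quantifier "for all $y \in G$" is preserved. Note also that this proof does not use the hypothesis $x \otimes x = 1_\otimes$, only $\alpha \in A^\otimes(G)$, consistent with the statement of the lemma.
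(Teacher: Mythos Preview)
Your proof of part (iv) is correct and follows essentially the same route as the paper: both arguments hinge on the identity $\alpha(x)\otimes\alpha(y)=(\alpha^2(y)\otimes x)^{-1}$ obtained from Lemma~2.2, and then link $\alpha^2(y)\otimes x = y\otimes x$ to $\alpha^2\in Aut_c^\otimes(G)$. The only difference is that you invoke part (i) directly for this last equivalence, whereas the paper re-derives it from the definition of $Aut_c^\otimes(G)$ (expanding $y^{-1}\alpha^2(y)\otimes x=1_\otimes$ and using that $[y,\alpha^2]$ acts trivially on $G\otimes G$); your version is therefore a bit more economical.
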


\begin{proof} (i) Let $x,y \in G$.
Then we have
$$y \otimes x=y \alpha(y^{-1})\alpha(y) \otimes x= ([ y, \alpha] \otimes x)^{\alpha(y)}(\alpha(y) \otimes x).$$
The above equalities show that $\alpha (y) \otimes x=y\otimes x$, for all $x,y \in G$,
if and only if $[y,\alpha] \in Z^ \otimes(G)$, for all $y \in G$, and this
holds if and only if $\alpha \in Aut_c^\otimes(G)$.\\

(ii) The proof is similar to (i), but we use the following equalities
$$x\otimes y=x\otimes \alpha(y)\alpha(y^{-1})y=(x \otimes [ y, \alpha]^{-1})(x\otimes \alpha(y))^{[ y, \alpha]^{-1}}.$$

(iii) Using part (i) and the assumption, for all $x,y \in G$ we have
$$x\otimes y=\alpha(x)\otimes y=\alpha^2(x)\otimes y=\alpha(x)\otimes \alpha(y)=\alpha \otimes \alpha(x \otimes y).$$

(iv) We have
\begin{eqnarray*}
\alpha^2 \in Aut_c^\otimes(G) &\Longleftrightarrow& [y,\alpha^{2}] \in Z^{\otimes}(G), \forall y \in G
\\&\Longleftrightarrow& y^{-1} \alpha^2(y) \otimes x=1_\otimes,   \forall x \in G
\\&\Longleftrightarrow& ( y^{-1}  \otimes x)^{\alpha^2(y)} (  \alpha^2(y) \otimes x)=1_\otimes
\\&\Longleftrightarrow& \alpha^2(y) \otimes x= ( y^{-1}  \otimes x)^{ - \alpha^2(y)} .\\
\end{eqnarray*}

Also $ ( y^{-1}  \otimes x)^{ - \alpha^2(y)}= ( y \otimes x)^{ y^{-1} \alpha^2(y)}=y \otimes x$ and $ \alpha^2(y) \otimes x= (\alpha(x) \otimes \alpha (y))^{-1}= (\alpha \otimes \alpha (x \otimes y ))^{-1}$. Therefore $y \otimes x=( \alpha \otimes \alpha (x \otimes y ))^{-1}$. This complete the proof.
\end{proof}

It is interesting to know that when $A^\otimes(G)$ is a subgroup of $Aut(G)$.
The next result applies $Aut_c^\otimes(G)$ to give a sufficient condition for $A^\otimes(G)$ to
be a subgroup of $Aut(G)$.\\

\begin{thm} Let $G$ be a group  such that $x\otimes
x=1_\otimes$ for all $x \in G$. Assume that $(A^\otimes(G))'\subseteq
Aut^\otimes_c(G)$ and for all $\alpha \in Aut(G)$, the identity
$\alpha(x)\otimes x=x\otimes \alpha(x)$ implies that $x\otimes
\alpha(x)=1_\otimes$. Then $A^\otimes(G)$ is a subgroup of
$Aut(G)$.
\end{thm}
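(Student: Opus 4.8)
The plan is to show that $A^\otimes(G)$ is closed under products, since Theorem 2.3 already establishes closure under inverses (and the identity clearly lies in $A^\otimes(G)$). So given $\alpha,\beta \in A^\otimes(G)$, the goal is to prove $\alpha\beta \in A^\otimes(G)$, and by Theorem 2.3(iv) this is equivalent to proving that $\alpha(x)\otimes\beta(x) = 1_\otimes$ for all $x \in G$.

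**First I would** use the hypothesis $(A^\otimes(G))' \subseteq Aut_c^\otimes(G)$. Although $A^\otimes(G)$ is not yet known to be a group, the commutator $[\alpha,\beta] = \alpha^{-1}\beta^{-1}\alpha\beta$ is a genuine element of $Aut(G)$ (a product of elements of $A^\otimes(G)$, each of which is an automorphism), and the hypothesis asserts it is tensor central. By Theorem 2.3(v) applied with $n=0$ we have $\beta\alpha\beta \in A^\otimes(G)$ and also $\alpha\beta\alpha \in A^\otimes(G)$; combined with closure under inverses, one can build from $\alpha,\beta$ a supply of elements of $A^\otimes(G)$. The key move is to write $\gamma := [\alpha,\beta] = \alpha^{-1}(\beta^{-1}\alpha\beta)$, where $\alpha^{-1} \in A^\otimes(G)$ by 2.3(iii) and $\beta^{-1}\alpha\beta \in A^\otimes(G)$ by 2.3(ii); so $\gamma$ is a product of two elements of $A^\otimes(G)$, hence by 2.3(iv) satisfies $\alpha^{-1}(x)\otimes\beta^{-1}\alpha\beta(x)=1_\otimes$ iff $\gamma \in A^\otimes(G)$. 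Since also $\gamma \in Aut_c^\otimes(G) \subseteq A^\otimes(G)$ by Lemma 3.2, we conclude $\gamma \in A^\otimes(G)$, i.e. $[\alpha,\beta] \in A^\otimes(G)$, and moreover $\alpha^{-1}(x)\otimes\beta^{-1}\alpha\beta(x)=1_\otimes$ for all $x$.

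**Next**, I would translate the condition $\alpha\beta \in A^\otimes(G)$ into a statement about $\alpha(x)\otimes\beta(x)$ and relate it to $\beta(x)\otimes\alpha(x)$. By Lemma 1.1 and Lemma 2.2, the element $\alpha(x)\otimes\beta(x)$ and $\beta(x)\otimes\alpha(x)$ are related via the map $\theta$ of Lemma 1.2(i) up to inversion and conjugation by $[\alpha(x),\beta(x)]$. The strategy is to set $u(x) = \alpha(x)\otimes\beta(x)$ and show first that $u(x)$ is central in $G\otimes G$, or at least that $u(x)$ commutes with $\theta(u(x)) = (\beta(x)\otimes\alpha(x))^{-1}$; then Lemma 2.2-type manipulations give $u(x) = (\alpha(x)\otimes\beta(x))$ equal to its own inverse conjugated trivially, forcing $u(x)^2 = 1_\otimes$ or $u(x)$ to be a specific torsion element. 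At this point the second hypothesis enters: for the automorphism $\delta := \beta^{-1}\alpha \in Aut(G)$ (or a suitable conjugate), one checks that $\delta(x)\otimes x = x \otimes \delta(x)$ — this should follow from the centrality obtained from $(A^\otimes(G))' \subseteq Aut_c^\otimes(G)$ together with Lemma 3.3(i) — and hence by hypothesis $x\otimes\delta(x)=1_\otimes$, which after unravelling yields $\alpha(x)\otimes\beta(x)=1_\otimes$.

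**The hard part will be** the middle step: bridging from "$[\alpha,\beta]$ is tensor central" to "$\alpha(x)\otimes\beta(x)$ is symmetric enough to invoke the second hypothesis." This requires careful bookkeeping with the identities in Lemma 1.1(i)–(ii), Lemma 2.2(i)–(ii), and Lemma 3.3(i)–(ii), keeping track of which conjugating elements disappear because they lie in $Z^\otimes(G)$ or because $\alpha,\beta$ are tensor commuting (so various $[\alpha(y),y]=1$). I expect one must show that the automorphism whose "displacement" is being analyzed — namely $\alpha\beta$ composed appropriately with inverses — satisfies the fixed-point-like identity $\eta(x)\otimes x = x\otimes\eta(x)$ for a well-chosen $\eta \in Aut(G)$, at which point the hypothesis on automorphisms of $G$ finishes the argument. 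Once $\alpha(x)\otimes\beta(x)=1_\otimes$ is established, Theorem 2.3(iv) gives $\alpha\beta\in A^\otimes(G)$, and together with 2.3(iii) and the presence of the identity automorphism, $A^\otimes(G)$ is a subgroup of $Aut(G)$.
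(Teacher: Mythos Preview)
Your overall framework is right --- reduce to closure under products and show $\eta(x)\otimes x = x\otimes \eta(x)$ for some $\eta\in Aut(G)$ so that the second hypothesis finishes the job --- but the proposal leaves the central computation undone and hedges on the choice of $\eta$. The suggested $\delta=\beta^{-1}\alpha$ does not lead cleanly to $\alpha\beta\in A^\otimes(G)$, and the detour establishing $[\alpha,\beta]\in A^\otimes(G)$ and $\alpha^{-1}(x)\otimes\beta^{-1}\alpha\beta(x)=1_\otimes$ is not needed.

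The paper's argument is shorter and more direct than what you anticipate: take $\eta=\alpha\beta$ itself. From the hypothesis $[\alpha,\beta]\in Aut_c^\otimes(G)$ one has $[x,[\alpha,\beta]]\in Z^\otimes(G)$, hence (expanding $x^{-1}[\alpha,\beta](x)\otimes \alpha\beta(x)=1_\otimes$)
\[
[\alpha,\beta](x)\otimes \alpha\beta(x)=\bigl(x\otimes\alpha\beta(x)\bigr)^{[x,[\alpha,\beta]]}=x\otimes\alpha\beta(x).
\]
Now apply Lemma~2.2 twice: first Lemma~2.2(i) with $\alpha$ gives
\[
[\alpha,\beta](x)\otimes\alpha(\beta(x))=\bigl(\beta(x)\otimes\alpha([\alpha,\beta](x))\bigr)^{-1}=\bigl(\beta(x)\otimes\beta^{-1}\alpha\beta(x)\bigr)^{-1},
\]
and then Lemma~2.2(ii) with $\beta$ turns $\beta(x)\otimes\beta^{-1}\alpha\beta(x)$ into $\bigl(\alpha\beta(x)\otimes x\bigr)^{-1}$. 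Combining, $x\otimes\alpha\beta(x)=\alpha\beta(x)\otimes x$, so the second hypothesis (applied with $\alpha\beta$ in the role of the automorphism) yields $x\otimes\alpha\beta(x)=1_\otimes$, i.e.\ $\alpha\beta\in A^\otimes(G)$. There is no need for centrality of $u(x)$ in $G\otimes G$, no torsion argument, and no intermediate identity $\alpha^{-1}(x)\otimes\beta^{-1}\alpha\beta(x)=1_\otimes$; the tensor-centrality of $[\alpha,\beta]$ is consumed in a single line, and the rest is two invocations of Lemma~2.2.
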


\begin{proof} Since $A^\otimes(G)$ is closed under inverses by Theorem 2.3 (iii), it is enough to show that $A^\otimes(G)$ is closed under multiplication.
Let $\alpha,\beta \in A^\otimes(G)$. By
hypothesis $[\alpha,\beta] \in Aut_c^\otimes(G)$ and so
$[x,[\alpha,\beta]] \in Z^\otimes (G)$, for all $x \in G$. Then we have $x^{-1}[\alpha, \beta](x) \otimes \alpha \beta(x)=1$ and this implies that
$$[\alpha, \beta](x) \otimes \alpha \beta(x)=(x \otimes \alpha \beta(x))^{[x, [\alpha, \beta]]}= x \otimes \alpha \beta(x).$$
Then applying Lemma 2.2, we have
\begin{eqnarray*}
 x \otimes \alpha \beta(x) &=&
( \beta(x)\otimes \alpha([\alpha, \beta](x)))^{-1}\\
&=&  ( \beta(x)\otimes \beta^{-1} \alpha \beta(x))^{-1}\\
&=& \alpha \beta(x) \otimes x.
\end{eqnarray*}
Then by the assumption of theorem, we have $x \otimes \alpha
\beta (x)=1_\otimes$ and therefore $ \alpha \beta \in A^\otimes (G)$ and hence the assertion follows.
\end{proof}

The next interesting result bridges together the tensor central automorphisms of a group $G$ and the elements of $Z_2^\otimes(G)$.\\

\begin{thm} Let $G$ be a group and $T_g$ be the inner automorphism of $G$ induced by element $g$ of
$G$. Then $T_g \in Aut_c^\otimes(G)\cap Inn(G)$ if and only if $g
\in Z_2^\otimes(G)$.
\end{thm}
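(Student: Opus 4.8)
The statement is an "if and only if" characterizing when an inner automorphism $T_g$ is tensor central in terms of membership of $g$ in the second tensor centre $Z_2^\otimes(G)$. I would unwind both sides to a single tensor identity and match them. By Definition~3.1, $T_g \in Aut_c^\otimes(G)$ means $[x,T_g] \in Z^\otimes(G)$ for all $x \in G$; since $T_g(x) = x^g = x[x,g]$, we have $[x,T_g] = x^{-1}x^g = [x,g]$. So the left-hand side says precisely that $[x,g] \in Z^\otimes(G)$ for every $x \in G$, i.e. $[x,g] \otimes y = 1_\otimes$ for all $x,y \in G$. On the other hand, $g \in Z_2^\otimes(G)$ means, by the definition recalled in the introduction, $[g,g_1]\otimes g_2 = 1_\otimes$ for all $g_1,g_2 \in G$. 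Since $[x,g] = [g,x]^{-1}$ and the map $u \mapsto u \otimes y$ behaves well under this inversion (via Lemma~1.1(i), or simply because $[g,x^{-1}]^{x} = [g,x]^{-1}$ type manipulations), the two conditions are visibly equivalent up to rewriting the first commutator slot. That is essentially the whole proof.

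The one genuine point of care is the bookkeeping when passing between $[x,g]\otimes y$ and $[g,x]\otimes y$: these differ by a conjugation. Concretely I would use Lemma~1.1(i), $(a^{-1}\otimes h)^{a} = (a \otimes h)^{-1}$, applied with a commutator in the first slot, to show $[x,g]\otimes y = 1_\otimes$ for all $x,y$ if and only if $[g,x]\otimes y = 1_\otimes$ for all $x,y$ — the conjugating element and the sign are harmless once the identity is quantified over all $x$ (replacing $x$ by $x^{-1}$ absorbs the discrepancy, and a conjugate of $1_\otimes$ is $1_\otimes$). So the chain of equivalences I would write is: $T_g \in Aut_c^\otimes(G)\cap Inn(G)$ $\iff$ $[x,T_g]=[x,g]\in Z^\otimes(G)$ for all $x$ $\iff$ $[x,g]\otimes y = 1_\otimes$ for all $x,y$ $\iff$ $[g,x]\otimes y = 1_\otimes$ for all $x,y$ $\iff$ $g \in Z_2^\otimes(G)$.

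**Main obstacle.** There is no deep obstacle; the theorem is a definitional identification. The only thing that could trip one up is the order of the commutator and the resulting conjugation factor in the tensor square, so the "hard part" — such as it is — is stating the first-slot swap $[x,g]\otimes y \leftrightarrow [g,x]\otimes y$ cleanly via Lemma~1.1(i) rather than by an ad hoc computation, and making sure the universal quantifier over $x$ is used to kill the conjugation. Once that lemma is invoked, both directions of the equivalence fall out simultaneously, so I would present the proof as a single displayed chain of iff's rather than splitting into "$\Rightarrow$" and "$\Leftarrow$".
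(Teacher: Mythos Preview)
Your proposal is correct and follows essentially the same route as the paper: a chain of iff's that unwinds the tensor-central condition for $T_g$ down to $[g,x]\otimes y=1_\otimes$ for all $x,y$, which is the definition of $g\in Z_2^\otimes(G)$. The only cosmetic difference is that the paper starts from the criterion of Lemma~3.3(i) ($\alpha(y)\otimes x=y\otimes x$) and expands $y^g=[g,y^{-1}]y$, thereby landing directly on $[g,y^{-1}]\otimes x=1_\otimes$ without needing your commutator-swap step, whereas you start from Definition~3.1 and compute $[x,T_g]=[x,g]$; both reductions are equally short and amount to the same definitional identification.
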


\begin{proof} By Lemma 3.3 (i), we have

\begin{eqnarray*}
T_g \in Aut_c^\otimes(G)\cap Inn(G)&\Longleftrightarrow&  T_g(y)\otimes x= y\otimes x, \hspace{0.5cm} \forall x,y \in G\\
&\Longleftrightarrow&  y^g \otimes x= y\otimes x, \hspace{1.0 cm} \forall x,y \in G\\
&\Longleftrightarrow&  [g,y^{-1}]y \otimes x= y\otimes x, \hspace{0.08cm} \forall x,y \in G\\
&\Longleftrightarrow& ([g,y^{-1}] \otimes x)=1_\otimes, \hspace{0.4cm} \forall x,y \in G\\
&\Longleftrightarrow& [g,y^{-1}] \in Z^\otimes(G), \hspace{0.75cm} \forall y \in G\\
&\Longleftrightarrow& g \in Z_2^\otimes(G).\\
\end{eqnarray*}

\end{proof}

\begin{cor} Let $G$ be a group. Then
$Aut_c^\otimes(G)\cap Inn(G)$ is a subgroup of $Aut(G)$ and
$Aut_c^\otimes(G)\cap Inn(G)\cong Z_2^\otimes(G)/Z(G)$.
\end{cor}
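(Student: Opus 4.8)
The plan is to mimic the structure of the proof of Corollary 2.6, using Theorem 3.5 in place of Theorem 2.5. First I would introduce the map $\psi : Z_2^\otimes(G) \rightarrow Inn(G)$ defined by $\psi(g) = T_g$, where $T_g$ is the inner automorphism induced by $g$. This is well-defined since $Z_2^\otimes(G)$ is a subgroup of $G$ (indeed a characteristic subgroup, as noted in the introduction), and it is the restriction of the natural homomorphism $G \rightarrow Inn(G)$, hence a group homomorphism.

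Next I would identify the kernel of $\psi$. An element $g \in Z_2^\otimes(G)$ lies in $\ker\psi$ precisely when $T_g = \mathrm{id}$, i.e. when $g \in Z(G)$. Since $Z(G) \subseteq Z_2^\otimes(G)$ (this follows from the definition of $Z_2^\otimes(G)$, and is part of what is recalled about the tensor centers in the introduction), we get $\ker\psi = Z(G)$ exactly.

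Then I would compute the image. By Theorem 3.5, for $g \in G$ we have $T_g \in Aut_c^\otimes(G) \cap Inn(G)$ if and only if $g \in Z_2^\otimes(G)$. Hence $\psi\bigl(Z_2^\otimes(G)\bigr) = \{ T_g : g \in Z_2^\otimes(G)\} = Aut_c^\otimes(G) \cap Inn(G)$. In particular $Aut_c^\otimes(G) \cap Inn(G)$, being the image of a group homomorphism, is a subgroup of $Inn(G)$ and therefore of $Aut(G)$. Finally, the first isomorphism theorem applied to $\psi$ yields
\[
Aut_c^\otimes(G) \cap Inn(G) \;=\; \psi\bigl(Z_2^\otimes(G)\bigr) \;\cong\; Z_2^\otimes(G)/\ker\psi \;=\; Z_2^\otimes(G)/Z(G),
\]
which is the assertion.

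There is no real obstacle here; the corollary is a formal consequence of Theorem 3.5 together with the elementary facts that $Z(G) \subseteq Z_2^\otimes(G)$ and that $g \mapsto T_g$ has kernel $Z(G)$. The only point requiring a line of care is verifying that the image of the restricted map is genuinely a subgroup — but this is automatic once it is exhibited as a homomorphic image, which is exactly the content of the bijective correspondence in Theorem 3.5.
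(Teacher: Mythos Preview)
Your argument is correct and essentially identical to the paper's: the paper defines the same map $\theta(g)=T_g$ on $Z_2^\otimes(G)$, notes that $\ker\theta=Z(G)$, and invokes Theorem~3.5 to get surjectivity onto $Aut_c^\otimes(G)\cap Inn(G)$, then applies the first isomorphism theorem. The only cosmetic difference is that the paper observes directly that $Aut_c^\otimes(G)\cap Inn(G)$ is a subgroup (as the intersection of two subgroups) rather than deducing it from being a homomorphic image.
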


\begin{proof}
It is clear that
$Aut_c^\otimes(G)\cap Inn(G)$ is a subgroup of $Aut(G)$.\\
Consider the map $\theta: Z_2^\otimes(G) \rightarrow Aut_c^\otimes(G) \cap Inn(G)$, defined by $\theta (g)=T_g.$
One can easily check that $\theta$ is an homomorphism with $\ker \theta = Z(G)$. Also, in view of Theorem 3.5, $\theta$
is epi and hence the result follows.
\end{proof}

\begin{thm} Let $G$ be a group such that $x\otimes x=1_\otimes$, for all $x \in G$. Then $(R_2^\otimes(G))'\subseteq
Z_2^\otimes(G)$.
\end{thm}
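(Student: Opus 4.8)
The plan is to connect this statement to Corollary 2.6 and Theorem 3.5 via the isomorphisms established there. By Corollary 2.6, under the hypothesis $x \otimes x = 1_\otimes$ for all $x \in G$, we have $A^\otimes(G) \cap \mathrm{Inn}(G) \cong R_2^\otimes(G)/Z(G)$, and the isomorphism is induced by $\theta\colon g \mapsto T_g$. Since the derived subgroup of $R_2^\otimes(G)/Z(G)$ corresponds to $(R_2^\otimes(G))'Z(G)/Z(G)$, and since passing to the quotient commutes with taking derived subgroups, it should suffice to show that $(A^\otimes(G) \cap \mathrm{Inn}(G))' \subseteq \mathrm{Aut}_c^\otimes(G) \cap \mathrm{Inn}(G)$; then pulling back along $\theta$ through Theorem 3.5 (which gives $\mathrm{Aut}_c^\otimes(G) \cap \mathrm{Inn}(G) \cong Z_2^\otimes(G)/Z(G)$) would yield $(R_2^\otimes(G))' Z(G) \subseteq Z_2^\otimes(G)$, hence $(R_2^\otimes(G))' \subseteq Z_2^\otimes(G)$ since $Z(G) \subseteq Z_2^\otimes(G)$ always.

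**First I would** reduce the containment $(A^\otimes(G) \cap \mathrm{Inn}(G))' \subseteq \mathrm{Aut}_c^\otimes(G)$ to an element-wise commutator computation. Take $g, h \in R_2^\otimes(G)$, so $T_g, T_h \in A^\otimes(G) \cap \mathrm{Inn}(G)$ by Theorem 2.5. We need $[T_g, T_h] = T_{[g,h]} \in \mathrm{Aut}_c^\otimes(G)$, i.e. $[g,h] \in Z_2^\otimes(G)$, which by Theorem 3.5 is exactly what $T_{[g,h]} \in \mathrm{Aut}_c^\otimes(G) \cap \mathrm{Inn}(G)$ means. So the whole theorem comes down to: if $g, h \in R_2^\otimes(G)$ then $[g, h] \in Z_2^\otimes(G)$, equivalently $[[g,h], y] \otimes z = 1_\otimes$ for all $y, z \in G$, equivalently (by Lemma 3.3(i) applied to $T_{[g,h]}$, or directly) $[g,h]$ acts trivially by conjugation on $G \otimes G$, i.e. $[g,h] \in C_G(G \otimes G)$.

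**The key step** is therefore showing $[g,h] \in C_G(G \otimes G)$ for $g, h \in R_2^\otimes(G)$. Here I would invoke Lemma 2.4: since $g \in R_2^\otimes(G)$ implies $T_g \in A^\otimes(G)$ by Theorem 2.5, Lemma 2.4 gives $[x, T_g] \in C_G(G \otimes G)$ for all $x \in G$; but $[x, T_g] = x^{-1} T_g(x) = x^{-1} x^g = [x,g]$, so we learn that $[x,g] \in C_G(G \otimes G)$ for every $x \in G$ and every $g \in R_2^\otimes(G)$. In particular, taking $x = h \in R_2^\otimes(G)$ gives $[h,g] \in C_G(G \otimes G)$, and since $C_G(G \otimes G)$ is a subgroup (it is the kernel of the action homomorphism $G \to \mathrm{Aut}(G \otimes G)$), also $[g,h] = [h,g]^{-1} \in C_G(G \otimes G)$. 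Then $[g,h] \in C_G(G \otimes G)$ means $w \otimes z$ is fixed under conjugation by $[g,h]$ for all $w,z$; applying this with $w = [g,h]^{-1}$ or using Lemma 1.1(iii)–(iv) to rewrite $[[g,h],y] \otimes z$ in terms of conjugates of $y \otimes z$ shows $[[g,h],y] \otimes z = 1_\otimes$, i.e. $[g,h] \in Z_2^\otimes(G)$. (Concretely, by Lemma 1.1(iii), $[g,h] \otimes [y, z] = (y \otimes z)^{-[g,h]}(y \otimes z) = 1_\otimes$, and with a parallel argument one controls $[[g,h],y] \otimes z$; this is where I would be most careful.)

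**The main obstacle** I anticipate is the bookkeeping in the last step: deducing the precise form $[[g,h], y] \otimes z = 1_\otimes$ needed for membership in $Z_2^\otimes(G)$ from the statement $[g,h] \in C_G(G \otimes G)$. One clean route avoids this entirely: once we know $[g,h] \in C_G(G\otimes G)$, note $[g,h] \in R_2^\otimes(G) \subseteq$ (a subgroup where things are controlled)? — actually the cleanest is to observe that $C_G(G \otimes G) \subseteq Z_2^\otimes(G)$ in general, since $a \in C_G(G \otimes G)$ gives, via Lemma 1.1(iv), $[a, y] \otimes z = (y \otimes z)^{-1}(y \otimes z)^a = (y \otimes z)^{-1}(y \otimes z) = 1_\otimes$ for all $y, z$, so in fact $a \in Z_2^\otimes(G)$ directly (indeed $[a,y]\otimes z=1_\otimes$ with no further commutators needed, which is even stronger). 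So the chain is $g,h \in R_2^\otimes(G) \Rightarrow [g,h] \in C_G(G\otimes G)$ (by Lemma 2.4 and Theorem 2.5) $\Rightarrow [g,h] \in Z_2^\otimes(G)$ (by Lemma 1.1(iv)), giving $(R_2^\otimes(G))' \subseteq Z_2^\otimes(G)$.
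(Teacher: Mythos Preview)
Your overall framework matches the paper's: reduce to showing $T_{[g,h]}\in \mathrm{Aut}_c^\otimes(G)$ for $g,h\in R_2^\otimes(G)$, then invoke Theorem~3.5. The use of Theorem~2.5 and Lemma~2.4 to obtain $[x,g]=[x,T_g]\in C_G(G\otimes G)$ for all $x\in G$ is also correct.

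The gap is in your ``clean route'' at the end. You claim that Lemma~1.1(iv) gives
\[
[a,y]\otimes z=(y\otimes z)^{-1}(y\otimes z)^{a},
\]
which would indeed vanish for $a\in C_G(G\otimes G)$. But Lemma~1.1(iv) actually reads $[g,h]\otimes g'=(g\otimes h)^{-1}(g\otimes h)^{g'}$, so with $g=a$, $h=y$, $g'=z$ one gets
\[
[a,y]\otimes z=(a\otimes y)^{-1}(a\otimes y)^{z},
\]
and knowing that $a$ acts trivially on $G\otimes G$ says nothing about the conjugate by $z$. In other words, $a\in C_G(G\otimes G)$ only gives the \emph{two-sided} invariance $y^{a}\otimes z^{a}=y\otimes z$, whereas membership in $Z_2^\otimes(G)$ (via Lemma~3.3(i) and Theorem~3.5) is the \emph{one-sided} condition $y^{a}\otimes z=y\otimes z$. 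Your argument does not bridge that gap, and the inclusion $C_G(G\otimes G)\subseteq Z_2^\otimes(G)$ is not something you can read off from Lemma~1.1.

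The paper closes this gap differently: it never passes through $C_G(G\otimes G)$. Instead it uses Corollary~2.6 to know that $T_g,\,T_h,\,T_gT_h,\,T_{[g,h]}$ all lie in $A^\otimes(G)$, and then applies Lemma~2.2 (the identity $x\otimes\alpha(y)=(y\otimes\alpha(x))^{-1}$ for $\alpha\in A^\otimes(G)$) five times in a chain, writing an arbitrary $z$ as $T_gT_h(y)$, to obtain directly
\[
T_{[g,h]}(x)\otimes z = x\otimes z,
\]
which is exactly the Lemma~3.3(i) criterion for $T_{[g,h]}\in \mathrm{Aut}_c^\otimes(G)$. So the missing ingredient in your approach is this repeated use of Lemma~2.2 for several elements of $A^\otimes(G)$ simultaneously; Lemma~2.4 alone is not enough.
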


\begin{proof}
 Let $g,h \in R_2^\otimes(G).$ Then by Theorem
2.5, we have $T_g, T_h \in A^\otimes(G)\cap Inn(G)$. Thus applying Corollary 2.6, we have
$T_{[g,h]}, T_g \circ T_h \in A^\otimes(G)\cap Inn(G)$. For an arbitrary element $z$ in $G$, there exists element $y$ in $G$ such that $z=T_g \circ T_h(y)$. Then applying Lemma 2.2, for all $x \in G$, we have
\begin{eqnarray*}
T_{[g,h]}(x) \otimes z &=& (T_{[g,h]}(z) \otimes x)^{-1} \\
 &=& (T_h\circ T_g(y)\otimes x)^{-1}\\
 &=& T_h\circ T_g(x) \otimes y \\
 &=& (T_h(y) \otimes T_g(x))^{-1}\\
 &=& x \otimes T_g \circ T_h(y) \\
 &=& x \otimes z.
\end{eqnarray*}
Then Lemma 3.3 (i) implies that $T_{[g,h]} \in Aut_c^\otimes(G)$ and hence $[g,h] \in Z_2^\otimes (G)$ by Theorem 3.5.

 \end{proof}

\


\end{document}